\newtheorem{thm}{Theorem}[section]
\newtheorem{cx}[thm]{Conjecture}
\theoremstyle{remark}
\newtheorem{zj}{Remark}[section]
\theoremstyle{definition}
\newtheorem{dy}{Definition}[section]
\numberwithin{equation}{section}
\renewcommand{\dim}{\operatorname{dim}}
\begin{document}
\title{ Chern number inequalities of deformed Hermitian-Yang-Mills metrics on four dimensional K\"ahler manifolds}
\author{Xiaoli Han}
\address{Xiaoli Han\\ Math department of Tsinghua university\\ Beijing\\ 100084\\ China\\} \email{hanxiaoli@mail.tsinghua.edu.cn}
\author{Xishen Jin}
\address{Xishen Jin\\ Department of Mathematics\\ Renmin University of China \\ Beijing\\ 100872\\ China\\} \email{jinxishen@ruc.edu.cn}

\begin{abstract}
  In this paper, we investigate the Chern number inequalities on $4$-dimensional K\"ahler manifolds admitting the deformed Hermitian-Yang-Mills metrics under the assumption $\hat\theta\in (\pi,2\pi)$.
\end{abstract}


\maketitle

\section{Introduction}
Let $(X,\omega)$ be a compact K\"ahler manifold of complex dimension $n$ and $L$ be a holomorphic line bundle over $X$. Given a Hermitian metric $h$ on $L$, we define a complex function $\zeta$ on $X$ by
\[
\zeta:=\frac{(\omega-F_h)^n}{\omega^n}
\]
where $F_h=-\partial\bar \partial \log h$ is the curvature of the Chern connection with respect to the metric $h$. It is easy to see that the average of this function is a fixed complex number
\[
Z_{L,[\omega]}:= \int_X \zeta \frac{\omega^n}{n!}
\]
depending only on the cohomology classes $c_1(L)$ and $[\omega]\in H^{1,1}(X,\mathbb{R})$. Let $\hat \theta$ be the argument of $Z_{L,[\omega]}$.

\begin{dy}
  A Hermitian metric $h$ on $L$ is said to be a deformed Hermitian-Yang-Mills (dHYM) metric if it satisfies
  \begin{equation}
    \label{eqn-dHYM-form}
    \operatorname{\mathop{Im}}(\omega-F)^n=\tan(\hat \theta) \operatorname{\mathop{Re}}(\omega-F)^n.
  \end{equation}
\end{dy}

The equation \eqref{eqn-dHYM-form} is called dHYM equation. The dHYM equation was first discovered by physical scientists Marino et all \cite{MMMS} as the requirement for a D-brane on the B-model of mirror symmetry to be supersymmetric. This phenomenon was explained by Leung-Yau-Zaslow \cite{LYZ} in mathematical language. From a viewpoint of differential geometry, this might be thought of a relationship between the existence of `nice' metrics on the line bundle over Calabi-Yau manifolds and special Lagrangian submanifolds in another Calabi-Yau manifolds.  Recently, the dHYM metrics have been studied actively(e.g. \cite{CJY}, \cite{CXY}, \cite{CY}, \cite{HY}, \cite{HJ}, \cite{JY}, \cite{Ping}, \cite{SS}, \cite{T2019} etc). See also \cite{CollinsShi} for a survey.

We define the Lagrangian Phase operator $\theta: \wedge^{1,1}X \to \mathbb{R}$ by
\[
\theta(F)=\sum_{j=1}^n\arctan\lambda_j,
\]
where $\lambda_j$($j=1,\cdots,n$) are the eigenvalues of $\omega^{-1}F \in \mathrm{End}(T^{1,0}X)$. Then according to arguments in \cite{JY}, the equation (\ref{eqn-dHYM-form}) is equivalent to
\begin{equation}\label{eqn-dHYM-angle}
\theta(F)=\hat\theta(\text{  mod  } 2\pi).
\end{equation}
In particular, we always change the interval of argument such that $\hat \theta=\theta(F)$ if $F$ is a dHYM metric in the whole paper. This angle $\hat \theta$ is also called the analytic lifted angle.

Given two $(1,1)$-class, let us consider the ``winding angle'' of
\[
Z_{[\omega],c_1(L)}(t) =-\int_X e^{-t\sqrt{-1} \omega} ch(L)
\]
as $t$ runs from $+\infty$ to $1$ if $Z_{[\omega],c_1(L)}(t)$ does not cross $0\in \mathbb{C}$. Here
\[
ch(L)=e^{c_1(L)}
\]
is the Chern character of $L$. More precisely,
\[
Z_{[\omega],c_1(L)}(t)=-\int_X \frac{(c_1(L)-t\sqrt{-1}\omega)^n}{n!}.
\]
This ``winding angle'' is also called algebraic lifted angle. The motivation to define the algebraic lifted angle is to compute the analytic lifted angle whenever the later exists. As discussed in \cite{CXY}, in dimension less than $4$, if $F$ is a dHYM metric and $\hat \theta\in (\frac{(n-2)\pi}{2},\frac{n\pi}{2})$, then the analytic lifted angle $\hat\theta$ equals the algebraic lifted angle.

As pointed in \cite{CollinsShi}, the only obstacle to define the algebraically lifted angle is the possibility that $Z_{[\omega],c_1(L)}(t)=0$ for some $T\in [1,+\infty)$, since if this case occurs, the ``winding angle'' is no-longer well-defined. When $\dim_{\mathbb{C}} X=1$, this can not occur since $\operatorname{\mathop{Im}}(Z_{[\omega],c_1(L)}(t))>0$. When $\dim_{\mathbb{C}} X=2$, $Z_{[\omega],c_1(L)}(t)\in \mathbb{C}^*=\mathbb{C}\setminus \{0\}$ by the Hodge index theorem as in \cite{CXY}. In the case of $\dim_{\mathbb{C}}X >2$, Collins-Yau \cite{CY} gave an example model on $Bl_p\mathbb{P}^3$ such that $Z_{[\omega],c_1(L)}(t)$ can pass through the origin. Collins-Xie-Yau \cite{CXY} proved the following Chern number inequality for $\dim_{\mathbb{C}} X=3$.

\begin{thm}[\cite{CXY}]
\label{Thm-CXY}
Suppose $(X,\omega)$ is $3$-dimensional K\"ahler manifold and $L$ admits a dHYM metric with analytic lifted angle $\hat \theta\in (\frac{\pi}{2},\frac{3\pi}{2})$. Then
\[
\left(\int_X \omega^3\right)\left(\int_X c_1(L)^3\right)< 9\left(\int_X c_1(L)^2\wedge \omega\right)\left(\int_X c_1(L)\wedge\omega^2\right).
\]
\end{thm}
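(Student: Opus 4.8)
The plan is to translate the cohomological inequality into a statement about the pointwise eigenvalues of the dHYM metric and to observe that, in the supercritical range $\hat\theta\in(\pi/2,3\pi/2)$, it follows from two \emph{pointwise} positivity facts together with one elementary symmetric‑function identity; somewhat surprisingly, no Hodge‑index‑type global argument seems to be needed. Concretely, let $F$ be the curvature form of a dHYM metric on $L$. Since the asserted inequality is invariant under $c_1(L)\mapsto s\,c_1(L)$ and $[\omega]\mapsto r[\omega]$ for $r,s>0$, we may use $F$ (a positive constant multiple of a real representative of $c_1(L)$) in place of $c_1(L)$. At an arbitrary point of $X$, diagonalize $\omega$ and $F$ simultaneously, so that $F$ has eigenvalues $\lambda_1,\lambda_2,\lambda_3$ with respect to $\omega$, and write $e_1,e_2,e_3$ for their elementary symmetric functions. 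Expanding wedge powers in these coordinates gives, as $(3,3)$‑forms, $F\wedge\omega^2=\tfrac13 e_1\,\omega^3$, $F^2\wedge\omega=\tfrac13 e_2\,\omega^3$ and $F^3=e_3\,\omega^3$, so (the numerical factor $9$ being exactly $3\cdot3$, absorbing the two $\tfrac13$'s) the inequality to be proved is equivalent to
\[
\left(\int_X\omega^3\right)\left(\int_X e_3\,\omega^3\right)\;<\;\left(\int_X e_1\,\omega^3\right)\left(\int_X e_2\,\omega^3\right),
\]
while the dHYM equation reads $\arctan\lambda_1+\arctan\lambda_2+\arctan\lambda_3=\hat\theta$ pointwise.

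Next I would record two pointwise consequences of $\hat\theta\in(\pi/2,3\pi/2)$. (i) For each pair $i\ne j$ with third index $k$, $\arctan\lambda_i+\arctan\lambda_j=\hat\theta-\arctan\lambda_k>0$ since $\hat\theta>\pi/2>\arctan\lambda_k$; as $\arctan$ is odd and strictly increasing this is equivalent to $\lambda_i+\lambda_j>0$. Combined with the identity $e_1e_2-e_3=(\lambda_1+\lambda_2)(\lambda_2+\lambda_3)(\lambda_3+\lambda_1)$ (both sides equal $p(e_1)$, where $p(x)=\prod_j(x-\lambda_j)=x^3-e_1x^2+e_2x-e_3$), this yields $e_1e_2>e_3$ at every point. (ii) The complex number $\prod_{j=1}^3(1+\sqrt{-1}\,\lambda_j)=(1-e_2)+\sqrt{-1}\,(e_1-e_3)$ has modulus $\prod_j\sqrt{1+\lambda_j^2}>0$ and argument $\hat\theta$ (this is exactly how $\theta(F)$ is defined), hence $1-e_2=\bigl(\prod_j\sqrt{1+\lambda_j^2}\bigr)\cos\hat\theta<0$ because $\cos\hat\theta<0$ on $(\pi/2,3\pi/2)$; thus $e_2>1$ everywhere, and moreover $\tan\hat\theta=\tfrac{e_1-e_3}{1-e_2}$, i.e.\ $e_3=e_1+\tan\hat\theta\,(e_2-1)$. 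Combining (i) and (ii) then gives $e_1-\tan\hat\theta=\tfrac{e_1e_2-e_3}{e_2-1}>0$ at every point.

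To conclude, write $V=\int_X\omega^3$, $A=\int_X e_1\,\omega^3$, $B=\int_X e_2\,\omega^3$ and $t=\tan\hat\theta$; integrating $e_3=e_1+t(e_2-1)$ gives $\int_X e_3\,\omega^3=A+t(B-V)$, and therefore
\[
AB-V\bigl(A+t(B-V)\bigr)=(A-tV)(B-V)=\left(\int_X(e_1-\tan\hat\theta)\,\omega^3\right)\left(\int_X(e_2-1)\,\omega^3\right).
\]
Both factors on the right are strictly positive since their integrands are pointwise positive, which is exactly the claimed strict inequality. The step that takes genuine insight is the second paragraph together with the factorization above: recognizing that the supercritical hypothesis alone forces the two pointwise bounds $\lambda_i+\lambda_j>0$ and $e_2>1$, and that the cohomological defect then splits as a product of two manifestly positive integrals — so that, in contrast to the $n\le 2$ situation (where one invokes the Hodge index theorem), no global inequality is required; the remainder is bookkeeping. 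A minor point to handle with care is the sign and normalization convention relating $F$ to $c_1(L)$, but this is harmless precisely because of the stated scale‑invariance.
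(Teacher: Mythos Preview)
Your proof is correct. Note, however, that the paper does not actually prove this statement: Theorem~\ref{Thm-CXY} is quoted from \cite{CXY} as background for the four-dimensional problem, and no argument for it is supplied here. There is therefore no ``paper's own proof'' to compare against directly.

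That said, it is instructive to compare your approach with the paper's treatment of the analogous four-dimensional inequality (the final theorem of Section~\ref{sec-3}). There the authors likewise begin by extracting pointwise positivity facts from the phase hypothesis --- the inequalities $\sigma_3-\sigma_1>0$, $\sigma_2-\sigma_4-1>0$, $\sigma_2>2$ --- but must then invoke a genuine \emph{global} input, the Khovanskii--Teissier inequality of Theorem~\ref{K-T}, to convert these into the cohomological conclusion. Your key observation, that in dimension three the defect
\[
AB-V\!\int_X e_3\,\omega^3=(A-tV)(B-V)=\left(\int_X(e_1-\tan\hat\theta)\,\omega^3\right)\left(\int_X(e_2-1)\,\omega^3\right)
\]
factors as a product of integrals of pointwise-positive functions, is precisely what makes the three-dimensional case more elementary: no Hodge-index or Khovanskii--Teissier step is required. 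The identity $e_1e_2-e_3=(\lambda_1+\lambda_2)(\lambda_2+\lambda_3)(\lambda_3+\lambda_1)$ and its use via $e_1-\tan\hat\theta=(e_1e_2-e_3)/(e_2-1)$ is a clean device with no direct four-dimensional counterpart in the paper, which is why the paper's argument in Section~\ref{sec-3} needs the extra global tool. A minor remark: the scale-invariance you invoke to replace $c_1(L)$ by the curvature form is harmless, and in the paper's conventions (see the four-dimensional proof, where $\alpha=\sqrt{-1}F\in c_1(L)$) no rescaling is even needed.
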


The Chern number inequalities play very important roles in the study of canonical metrics. For example, in the study of Hermian-Einstein metrics, Bogomolov \cite{Bogomolov} obtained the Bogomolov inequality for semi-stable holomorphic vector bundle and Simpson \cite{Simpson} proved the Bogomolov inequality for stable Higgs bundles on compact K\"ahler manifolds by constructing Higgs Hermitian-Einstein metrics. Furthermore, in the study of K\"ahler-Einstein metrics, Miyaoka \cite{Miyaoka1} and Yau \cite{Yau2} proved the famous Miyaoka-Yau inequality. The Miyaoka-Yau inequality can also be obtained by constructing Higgs structure on $T^{1,0}(X)\otimes \mathcal{O}_X$. A celebrated conjecture by Thomas-Yau \cite{Thomas2001,ThomasYau2002} is that the existence of special Lagrangian submanifolds in Calabi-Yau manifolds is equivalent to some stability conditions. In mirror symmetry, some stability conditions are needed in order to obtain the existence of dHYM metrics. This was studied in a series of works \cite{CJY,CXY,CY}. The Chern number inequality in Theorem \ref{Thm-CXY} is precisely the line bundle case of the inequality conjectured by Bayer-Macri-Toda \cite{BayerMacriToda} in their construction of Bridgeland stability conditions.
Furthermore, as described in \cite{CY}, the Chern number inequalities ensure that $Z_{[\omega],c_1(L)}(t)\in \mathbb{C}^*$ for all $t\in [1,+\infty)$. So the ``winding angle'' can be well-defined and also is the algebraic lifted angle.

In \cite{CY}, Collins-Yau proposed the following conjecture of Chern number inequalities on the $4$-dimension case.


\begin{cx}[\cite{CY}]
\label{cx-CY}
  Suppose $(X,\omega)$ is a four dimensional compact K\"ahler manifold and $L$ admits a dHYM metric $F$ with constant angle $\hat\theta\in (\frac{3\pi}{2},2\pi)$. Then the following Chern number inequalities hold
  \[
  \frac{c_1(L)^3\cdot \omega}{c_1(L)\cdot \omega^3}>1
  \]
  and
  \[
  \frac{(c_1(L)^3\cdot \omega)(\omega^4)}{c_1(L)\cdot \omega^3}-6(c_1(L)^2\cdot \omega^2) +\frac{(c_1(L)\cdot \omega^3)(c_1(L)^4)}{c_1(L)^3\cdot \omega}<0.
  \]
\end{cx}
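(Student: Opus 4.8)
The plan is to deduce both inequalities from one structural fact: under the hypothesis $\hat\theta\in(\tfrac{3\pi}{2},2\pi)$ the class $c_1(L)$ must be a Kähler class, after which the inequalities become elementary consequences of the Khovanskii--Teissier inequalities for the two Kähler classes $c_1(L)$ and $[\omega]$, together with the sign of $\operatorname{Im}Z_{L,[\omega]}$ for the first one. Throughout write $a_i:=c_1(L)^i\cdot[\omega]^{4-i}$ for $i=0,\dots,4$.

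First I would record the pointwise consequences of the dHYM equation. Fix a point and let $\lambda_1,\dots,\lambda_4$ be the eigenvalues of $\omega^{-1}F$; then $\sum_j\arctan\lambda_j=\hat\theta$ and $\zeta=\prod_j(1+\sqrt{-1}\lambda_j)=(1-\sigma_2+\sigma_4)+\sqrt{-1}(\sigma_1-\sigma_3)$, where $\sigma_k=\sigma_k(\lambda)$ is the $k$-th elementary symmetric function. Since each $\arctan\lambda_j<\tfrac{\pi}{2}$, the equality $\sum_j\arctan\lambda_j=\hat\theta>\tfrac{3\pi}{2}$ is impossible if any $\lambda_j\le 0$; hence $\lambda_j>0$ at every point, so $F$ is a positive $(1,1)$-form and $c_1(L)$ is a Kähler class. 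In particular all $a_i>0$. For the first inequality, note $\hat\theta=\arg Z_{L,[\omega]}\in(\pi,2\pi)$ forces $\operatorname{Im}Z_{L,[\omega]}=|Z_{L,[\omega]}|\sin\hat\theta<0$; on the other hand, integrating the pointwise identities $F^k\wedge\omega^{4-k}=\binom{4}{k}^{-1}\sigma_k(\lambda)\,\omega^4$ against the definition $Z_{L,[\omega]}=\tfrac1{4!}\int_X(\omega-F)^4$ gives $a_3-a_1=-6\operatorname{Im}Z_{L,[\omega]}$. Hence $a_3>a_1$, i.e. $\dfrac{c_1(L)^3\cdot\omega}{c_1(L)\cdot\omega^3}>1$.

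For the second inequality, multiplying through by $a_1a_3>0$ shows it is equivalent to $a_0a_3^2+a_1^2a_4<6\,a_1a_2a_3$. Since $c_1(L)$ and $[\omega]$ are Kähler classes, the Khovanskii--Teissier inequalities (equivalently, the mixed Hodge--Riemann bilinear relations) yield the log-concavity estimates $a_1^2\ge a_0a_2$, $a_2^2\ge a_1a_3$, $a_3^2\ge a_2a_4$. Therefore
\[
a_0a_3^2+a_1^2a_4\;\le\;\frac{a_1^2a_3^2}{a_2}+\frac{a_1^2a_3^2}{a_2}\;=\;\frac{2a_1a_3}{a_2}\cdot a_1a_3\;\le\;2a_2\cdot a_1a_3\;<\;6\,a_1a_2a_3,
\]
where the first step uses $a_0\le a_1^2/a_2$ and $a_4\le a_3^2/a_2$, the third uses $a_1a_3\le a_2^2$, and the last uses $a_1a_2a_3>0$. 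This is the desired inequality.

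The crux — and the only place where the restriction $\hat\theta>\tfrac{3\pi}{2}$, rather than merely $\hat\theta>\tfrac{(n-2)\pi}{2}=\pi$, enters — is the observation that in this phase range $F$ is forced to be a positive form, i.e. $c_1(L)$ is a Kähler class, which is exactly what makes the Khovanskii--Teissier machinery available; after that the second inequality even holds with a factor of $3$ to spare. The remaining care is purely bookkeeping of conventions (the $\sqrt{-1}$-factors hidden in $\zeta=(\omega-F)^4/\omega^4$, and the identification $[F]=c_1(L)$), so that the identities $\arg\zeta=\hat\theta$ and $F^k\wedge\omega^{4-k}=\binom{4}{k}^{-1}\sigma_k(\lambda)\,\omega^4$ are applied with matching normalizations. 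If one only assumes $\hat\theta\in(\pi,2\pi)$, as in the abstract, then at each point at most one $\lambda_j$ can be negative, $c_1(L)$ need not be Kähler, and the second inequality would require a genuinely more delicate positivity input; the first inequality, however, is unchanged, since $\sin\hat\theta<0$ throughout $(\pi,2\pi)$.
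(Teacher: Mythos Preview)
Your proof is correct and essentially matches the paper's. Both arguments hinge on the observation that $\hat\theta>\tfrac{3\pi}{2}$ forces $c_1(L)$ to be K\"ahler, after which the second inequality is reduced via the Khovanskii--Teissier log-concavity $a_{k-1}a_{k+1}\le a_k^2$ to the stronger estimate $a_0a_3^2+a_1^2a_4\le 2a_1a_2a_3$; the paper packages this step as a separate theorem (Theorem~\ref{thm-1}) but the manipulations are the same as yours.

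The only genuine difference is the first inequality. The paper argues pointwise: from $\hat\theta>\tfrac{3\pi}{2}$ one gets not only $\lambda_i>0$ but $\lambda_i\lambda_j>1$ for $i\ne j$, whence $\sigma_3(\lambda)>\sigma_1(\lambda)$ at every point and hence $a_3>a_1$ after integration. You instead use the global fact $\operatorname{Im}Z_{L,[\omega]}<0$ for $\hat\theta\in(\pi,2\pi)$ together with $a_3-a_1=-6\operatorname{Im}Z_{L,[\omega]}$. Your route is cleaner and, as you note, immediately covers the full range $\hat\theta\in(\pi,2\pi)$ once $a_1>0$ is known; the paper's pointwise inequality $\lambda_i\lambda_j>1$ carries extra local information but is not needed here. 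Either way the conclusion is the same.
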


%
%

Let us recall the motivation of Conjecture \ref{cx-CY} in \cite{CY}. If $\dim_{\mathbb{C}}X =4$, the path $Z_{[\omega],c_1(L)}(t)$ can be expressed by
\[
-\int_X(t^4\omega^4- 6t^2\omega^2\wedge c_1(L)^2 + c_1(L)^4) -4t\sqrt{-1}\int_X (t^2c_1(L)\wedge \omega^3 - c_1(L)^3\wedge \omega).
\]
For $t\approx+\infty$, $Z_{[\omega],c_1(L)}(t)$ lies near the negative real axis. Furthermore, at $t=1$,
\[
Z_{[\omega],c_1(L)}(1)=-\frac{1}{4!}\int_X(c_1(L)-\sqrt{-1}\omega)^4=-e^{\sqrt{-1}\hat\theta}R([\omega],c_1(L)),
\]
where $R([\omega],c_1(L))$ is a positive constant. Hence, if $\hat\theta\in (\pi,2\pi)$, then $Z_{[\omega],c_1(L)}(1)$ lies on the upper half-plane. Then $Z_{[\omega],c_1(L)}(t)$ must cross the positive real axis at some $t=T^*>1$, i.e. we must have $\operatorname{\mathop{Im}}(Z_{[\omega],c_1(L)}(T^*))=0$. So we should have
\[
\frac{c_1(L)^3\cdot \omega}{c_1(L)\cdot \omega^3}>1.
\]
Furthermore, at $T^*$ we must have that $\operatorname{\mathop{Re}}(Z_{[\omega],c_1(L)}(T^*))>0$ which is just the second Chern number inequality. Hence, we believe the Chern number inequalities should hold if $L$ admits a dHYM metric $F$ with constant angle $\hat\theta\in (\pi,2\pi)$.

In this paper, we prove the Chern number inequalities under the assumption $\hat\theta\in (\pi,2\pi)$ using the Khovanskii-Teissier inequalities in \cite{Collinsconcave,JianXiao}. Actually, we can prove the following theorem.

\begin{thm}
  Suppose $(X,\omega)$ is a four dimensional K\"ahler manifold and $L$ admits a dHYM metric $F$ with constant angle $\hat\theta\in (\pi,2\pi)$. Then the following Chern number inequalities hold
  \[
  \frac{c_1(L)^3\cdot \omega}{c_1(L)\cdot \omega^3}>1
  \]
  and
  \[
  \frac{(c_1(L)^3\cdot \omega)(\omega^4)}{c_1(L)\cdot \omega^3}-6(c_1(L)^2\cdot \omega^2) +\frac{(c_1(L)\cdot \omega^3)(c_1(L)^4)}{c_1(L)^3\cdot \omega}<0.
  \]
\end{thm}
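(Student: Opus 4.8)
The plan is to rewrite both inequalities in terms of the five intersection numbers $a_k:=c_1(L)^k\cdot\omega^{4-k}$ ($0\le k\le 4$; note $a_0=\int_X\omega^4>0$), and to combine the consequences of the dHYM equation in the supercritical range $\hat\theta\in(\pi,2\pi)=(\tfrac{(n-2)\pi}{2},\tfrac{n\pi}{2})$ — used both cohomologically and pointwise — with the Khovanskii-Teissier log-concavity inequalities of \cite{Collinsconcave,JianXiao}.

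For the first inequality I would use the identity recalled in the introduction, $-\tfrac1{4!}\int_X(c_1(L)-\sqrt{-1}\omega)^4=-e^{\sqrt{-1}\hat\theta}R$ with $R=R([\omega],c_1(L))>0$; expanding and separating real and imaginary parts gives
\[
a_1-a_3=6R\sin\hat\theta,\qquad a_0-6a_2+a_4=24R\cos\hat\theta .
\]
Since $\sin\hat\theta<0$ on $(\pi,2\pi)$, this already yields $a_3>a_1$. To pass from here to $a_3/a_1>1$ I need $a_1>0$, which I would obtain from the pointwise equation $\sum_{j=1}^4\arctan\lambda_j=\hat\theta>\pi$ (with $\lambda_j$ the eigenvalues of $\omega^{-1}F$): supercriticality forces at most one $\lambda_j\le 0$, and then a short argument — if $\lambda_4\le 0$, then each of $\arctan\lambda_1,\arctan\lambda_2,\arctan\lambda_3$ exceeds $-\arctan\lambda_4$ since the other two of them sum to less than $\pi$, hence $\lambda_1,\lambda_2,\lambda_3>-\lambda_4$ — shows $\sum_j\lambda_j>0$ at every point; as a suitably normalized curvature form represents $c_1(L)$ with $\omega$-trace $\sum_j\lambda_j$, we conclude $a_1>0$, so $a_3>a_1>0$.

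For the second inequality, multiplying through by $a_1a_3>0$ reduces it to $a_0a_3^2+a_1^2a_4<6\,a_1a_2a_3$. Granting that $k\mapsto a_k$ is log-concave (Khovanskii-Teissier) and that $a_1,a_2,a_3>0$, one has $a_0a_3\le a_1a_2$ and $a_1a_4\le a_2a_3$ (both follow from $a_1^2\ge a_0a_2$, $a_2^2\ge a_1a_3$, $a_3^2\ge a_2a_4$), and therefore
\[
a_0a_3^2+a_1^2a_4=(a_0a_3)\,a_3+a_1\,(a_1a_4)\le a_1a_2a_3+a_1a_2a_3=2\,a_1a_2a_3<6\,a_1a_2a_3 ,
\]
the last step being strict because $a_1a_2a_3>0$. (The constant $6$ is far from sharp in this argument.)

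The point requiring real work is justifying the Khovanskii-Teissier step, since $c_1(L)$ need not be nef: when $\hat\theta\in(\pi,\tfrac{3\pi}{2})$ the curvature of a dHYM metric may have one negative eigenvalue. I would handle this using the uniform eigenvalue lower bound that supercriticality provides — the other three arctangents sum to less than $\tfrac{3\pi}{2}$, so $\arctan\lambda_j>\hat\theta-\tfrac{3\pi}{2}$, i.e. $\lambda_j>-c_0$ with $c_0:=\max\{0,\tan(\tfrac{3\pi}{2}-\hat\theta)\}\ge 0$ — so that $c_1(L)$ is Kähler when $\hat\theta\ge\tfrac{3\pi}{2}$, while in general $\alpha:=c_1(L)+c_0[\omega]$ is a Kähler class. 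It then remains to establish $a_0a_3^2+a_1^2a_4<6\,a_1a_2a_3$ (and $a_2>0$) by applying the Khovanskii-Teissier/Hodge-index inequalities of \cite{Collinsconcave,JianXiao} to the nef pair $(\alpha,[\omega])$ and combining them with the two identities above to absorb the terms involving $c_0$; this combination — which replaces the clean two-line estimate valid when $c_1(L)$ is itself nef — is the technical heart of the argument, the remainder being the elementary bookkeeping above.
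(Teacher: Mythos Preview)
Your treatment of the first inequality and of the second inequality when $c_1(L)$ is K\"ahler (i.e.\ $\hat\theta\in[\tfrac{3\pi}{2},2\pi)$) is correct and essentially matches the paper: the log-concavity argument $a_0a_3^2+a_1^2a_4\le 2a_1a_2a_3<6a_1a_2a_3$ is exactly Theorem~\ref{thm-1}. (Your derivation of $a_3>a_1$ from the global identity $a_1-a_3=6R\sin\hat\theta$ is in fact a bit cleaner than the paper's pointwise computation $\sigma_3-\sigma_1>0$.)

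The genuine gap is the range $\hat\theta\in(\pi,\tfrac{3\pi}{2})$. Your plan is to shift to the K\"ahler class $c_1(L)+c_0[\omega]$ and ``absorb the terms involving $c_0$,'' but you do not carry this out, and it is not clear it can be made to work: the shifted class is no longer dHYM, so the two scalar identities you wrote down are unavailable for it, while the unshifted class lacks the third log-concavity relation $a_2a_4\le a_3^2$ that your clean estimate uses. The paper avoids shifting entirely. The key observation is that the dHYM representative has $\sigma_1,\sigma_2,\sigma_3>0$ (Wang--Yuan), so $c_1(L)\in\mathcal{K}_{\Gamma_3,\omega}$ and Theorem~\ref{K-T} applies directly---but only for $m=2,3$, yielding $a_0a_2\le a_1^2$ and $a_1a_3\le a_2^2$ (hence $a_0a_3\le a_1a_2$), \emph{not} $a_2a_4\le a_3^2$. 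To control $a_4$ the paper brings in two further ingredients: a pointwise estimate $\sigma_2>2$ (proved from the eigenvalue configuration forced by $\hat\theta>\pi$), and the cohomological dHYM identity $\tan\hat\theta=(\int\sigma_3-\int\sigma_1)/(\int\sigma_2-1-\int\sigma_4)>0$. Combining $a_0a_3\le a_1a_2$ with $\int\sigma_2>2$ gives $\int\sigma_3-\int\sigma_1\int\sigma_2+\int\sigma_1<0$; multiplying this against the dHYM identity and clearing the (positive) denominator $\int\sigma_2-1-\int\sigma_4$ produces exactly $(\int\sigma_3)^2-\int\sigma_1\int\sigma_2\int\sigma_3+(\int\sigma_1)^2\int\sigma_4<0$, which is the second Chern number inequality. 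In short: rather than trying to recover full log-concavity by perturbation, the paper uses the dHYM equation itself to eliminate the problematic top intersection number $a_4$.
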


{\bf Acknowledgements:} The authors would like to thank T. Collins and J. Xiao for some helpful discussions. The research is partially supported by NSFC 11721101. The second author is also supported by the Fundamental Research Funds for the Central Universities and the Research Funds of Renmin University of China.

\section{Preliminaries on Khovanskii-Teissier inequalities}
\label{sec-2}
In this section, we review the Khovanskii-Teissier inequality. We always assume that $(X,\omega)$ is an $n$-dimension compact K\"ahler manifold. The original Khovanskii-Teissier inequality was discovered by Khovanskii \cite{Khovanskii} and Teissier \cite{Teissier1,Teissier2}. The Khovanskii-Teissier inequality tells us that for any $\beta\in H^{1,1}(X,\mathbb{R})$, there holds
\[
\left(\int_X \beta^2\wedge \omega^{n-2}\right)\left(\int_X\omega^n\right)\leq \left(\int_X\beta\wedge \omega^{n-1}\right)^2.
\]
This inequality can be viewed as a generalization of the Hodge Index Theorem. In fact, the Khovanskii-Teissier inequality has been extended beyond the K\"ahler cone, see \cite{Boucksom,Collinsconcave,JianXiao}. In this paper, we need the generalized Khovanskii-Teissier inequalities related to the complex Hessian equations. These generalized Khovanskii-Teissier inequalities can be found in \cite{Collinsconcave,JianXiao}. We first recall the definition of $k$-positive cone $\mathcal{K}_{\Gamma_k,\omega}$ in $H^{1,1}(X,\mathbb{R})$ as in \cite{Collinsconcave}.

\begin{dy}[\cite{Collinsconcave}]
  We denote $\mathcal{K}_{\Gamma_k,\omega}\subset H^{1,1}(X,\mathbb{R})$ the cone of classes admitting a $k$-positive representative with respect to $\omega$, i.e., for any $[\alpha]\in \mathcal{K}_{\Gamma_k,\omega}$, there exists a closed $(1,1)$-form $\alpha\in [\alpha]$ such that the eigenvalues $\Lambda=(\lambda_1,\cdots,\lambda_n)$ satisfies
  \[
  \sigma_i(\Lambda)>0
  \]
  for $i=1,\cdots,k$, where $\sigma_i$ is the $i$-th elementary symmetric polynomial.
\end{dy}

\begin{zj}
  $\mathcal{K}_{\Gamma_k,\omega}$ is an open convex cone in $H^{1,1}(X,\mathbb{R})$. It is easy to see
  \[
  \mathcal{K}_{\Gamma_n,\omega}\subset \mathcal{K}_{\Gamma_{n-1},\omega}\subset\cdots\subset\mathcal{K}_{\Gamma_1,\omega}
  \]
  and $\mathcal{K}_{\Gamma_n,\omega}$ is the K\"ahler cone.
\end{zj}

Collins \cite{Collinsconcave} and Xiao \cite{JianXiao} proved the following generalized Khovanskii-Teissier inequalities on $\mathcal{K}_{\Gamma_k,\omega}$ by using the complex Hessian equations and the concavity inequalities for elementary symmetric polynomials.

\begin{thm}[\cite{Collinsconcave,JianXiao}]
\label{K-T}
Let $(X,\omega)$ be a compact $n$-dimensional K\"ahler manifold. For any $[\alpha]\in \mathcal{K}_{\Gamma_k,\omega}$ and $[\beta]\in H^{1,1}(X,\mathbb{R})$, there holds the following generalized Khovanskii-Teissier type inequalities
\[
\left(\omega^{n-m}\cdot\alpha^{m-2}\cdot\beta^2\right)\left(\omega^{n-m}\cdot\alpha^m\right)\leq \left(\omega^{n-m+1}\cdot\beta\cdot\alpha^{m-1}\right)^2,
\]
for all $m=2,\cdots, k$. Equalities hold if and only if $[\beta]=\lambda[\alpha]$ for some $\lambda\in \mathbb{R}$.
\end{thm}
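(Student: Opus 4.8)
The plan is to read the asserted inequality as a reverse Cauchy--Schwarz for the symmetric bilinear form
\[
q(\eta,\zeta):=\int_X \omega^{n-m}\wedge\alpha^{m-2}\wedge\eta\wedge\zeta,\qquad \eta,\zeta\in H^{1,1}(X,\RR),
\]
so that the claim becomes $q(\beta,\beta)\,q(\alpha,\alpha)\le q(\alpha,\beta)^2$, with $q(\alpha,\alpha)=\int_X\omega^{n-m}\wedge\alpha^m$ and $q(\alpha,\beta)=\int_X\omega^{n-m}\wedge\alpha^{m-1}\wedge\beta$. Since $[\alpha]\in\mathcal{K}_{\Gamma_k,\omega}\subset\mathcal{K}_{\Gamma_m,\omega}$ for $m\le k$, I would first fix a representative $\alpha$ whose eigenvalues $\Lambda$ relative to $\omega$ lie in $\Gamma_m$ at every point. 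By the Newton--MacLaurin (G\aa rding) inequalities this already gives $q(\alpha,\alpha)>0$, and then it is a standard linear-algebra fact that the desired reverse Cauchy--Schwarz is equivalent to a single Hodge-index statement: writing $\beta=\lambda\alpha+\gamma$ with $\lambda:=q(\alpha,\beta)/q(\alpha,\alpha)$ chosen so that $q(\alpha,\gamma)=0$, one must show $q(\gamma,\gamma)\le 0$, with equality iff $[\gamma]=0$.

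The pointwise concavity (G\aa rding) inequality for $\sigma_m$ gives, at every point and in a frame diagonalising $\alpha$ with respect to $\omega$,
\[
\big(\omega^{n-m}\wedge\alpha^{m-2}\wedge\gamma^2\big)\big(\omega^{n-m}\wedge\alpha^m\big)\le\big(\omega^{n-m}\wedge\alpha^{m-1}\wedge\gamma\big)^2
\]
as densities, with equality precisely when $\gamma$ is pointwise proportional to $\alpha$. The difficulty is that integrating this pointwise inequality does \emph{not} yield the global one: Cauchy--Schwarz runs the wrong way. To bridge pointwise and global I would exploit the freedom to change $\gamma$ within its cohomology class, which alters neither $q(\alpha,\gamma)$ nor $q(\gamma,\gamma)$, and arrange that the mixed term vanishes pointwise. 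Concretely, set $\Theta:=\omega^{n-m}\wedge\alpha^{m-1}$ and solve the linear equation
\[
\Theta\wedge\sqrt{-1}\,\partial\bar\partial v=-\,\Theta\wedge\gamma
\]
for a function $v$, replacing $\gamma$ by $\gamma':=\gamma+\sqrt{-1}\,\partial\bar\partial v$, so that $\Theta\wedge\gamma'=0$ pointwise.

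Solvability of this equation is the crux and is exactly where $\Gamma_k$-positivity enters. The operator $v\mapsto \Theta\wedge\sqrt{-1}\,\partial\bar\partial v$ is the linearised complex Hessian operator; in the diagonalising frame its symbol is $\sum_i\sigma_{m-1}(\Lambda\mid i)\,\xi_i\bar\xi_i$, which is elliptic because $\sigma_{m-1}(\Lambda\mid i)>0$ for $\Lambda\in\Gamma_m$ (again Newton--MacLaurin). The same positivity shows that $\Theta$ is a strictly positive $(n-1,n-1)$-form, whence the operator is formally self-adjoint on $L^2(\omega^n)$ (integrate by parts twice, using $d\Theta=0$) and its kernel consists only of the constants. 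By the Fredholm alternative the equation is solvable if and only if $\int_X\Theta\wedge\gamma=q(\alpha,\gamma)=0$, which is exactly our normalisation. (One could instead solve the nonlinear Hessian equation $\omega^{n-m}\wedge\alpha_\varphi^m=c\,\omega^n$ as in Collins and Xiao, but the linearised version suffices here.)

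With such a representative $\gamma'$ in hand the pointwise inequality specialises: since $\Theta\wedge\gamma'=0$ and $\omega^{n-m}\wedge\alpha^m>0$ pointwise, it forces $\omega^{n-m}\wedge\alpha^{m-2}\wedge\gamma'^2\le 0$ pointwise, and integrating gives $q(\gamma,\gamma)=q(\gamma',\gamma')\le 0$, which is the inequality. For the equality case, $q(\gamma,\gamma)=0$ forces equality in the pointwise G\aa rding inequality almost everywhere; combined with $\Theta\wedge\gamma'=0$ this forces $\gamma'\equiv 0$, hence $[\beta]=\lambda[\alpha]$. I expect the main obstacle to be precisely the pointwise-to-global passage of the second and third paragraphs: checking that $q$ is genuinely unchanged under the cohomological modification $\gamma\mapsto\gamma'$ (Stokes, using closedness of $\omega$ and $\alpha$), that the linearised operator is elliptic and self-adjoint with one-dimensional kernel, and that its solvability condition is met. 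The pointwise G\aa rding inequality and the final integration are then comparatively routine.
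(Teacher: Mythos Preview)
The paper does not give its own proof of this theorem; it is quoted from \cite{Collinsconcave,JianXiao}, with only the one-line remark that those authors proceed ``by using the complex Hessian equations and the concavity inequalities for elementary symmetric polynomials.'' Your proposal is correct and follows exactly this template: the pointwise G\aa rding inequality for $\sigma_m$ supplies the concavity input, and a PDE is solved to pass from the pointwise estimate to the global one. The one genuine difference is that the cited references solve the \emph{nonlinear} complex Hessian equation $\omega^{n-m}\wedge\alpha_\varphi^{\,m}=c\,\omega^n$ to select a special representative of $[\alpha]$, whereas you keep an arbitrary $m$-positive representative of $[\alpha]$ and instead solve the \emph{linearised} equation $\Theta\wedge\sqrt{-1}\,\partial\bar\partial v=-\Theta\wedge\gamma$ to adjust the representative of $[\gamma]$. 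This is a mild but real simplification: once $\alpha$ has eigenvalues in $\Gamma_m$, the coefficients $\sigma_{m-1}(\Lambda\mid i)$ are positive, so your operator is transparently uniformly elliptic and formally self-adjoint with one-dimensional kernel, and the Fredholm alternative applies directly without invoking the existence theory for fully nonlinear Hessian equations.

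One small remark: as printed, the right-hand side of the inequality reads $(\omega^{n-m+1}\cdot\beta\cdot\alpha^{m-1})^2$, which has total bidegree $(n+1,n+1)$ and therefore vanishes identically; this is evidently a typo for $(\omega^{n-m}\cdot\beta\cdot\alpha^{m-1})^2$, and you have correctly worked with the latter.
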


\section{Chern number inequalities in four dimension}
\label{sec-3}
In this section, we first prove an inequality in the K\"ahler cone.

\begin{thm}
\label{thm-1}
  Suppose $\alpha$ and $\omega$ are any two K\"ahler classes on $M$, then we have the following inequality
  \[
{\frac{\left(\alpha^{3} \cdot \omega\right)\left(\omega^{4}\right)}{\alpha \cdot \omega^{3}}-2\left(\alpha^{2} \cdot \omega^{2}\right)+\frac{\left(\alpha \cdot \omega^{3}\right)\left(\alpha^{4}\right)}{\alpha^{3} \cdot \omega}\leq 0}.
  \]
  Moreover, the equality holds if and only if $\alpha$ and $\omega$ are proportional.
\end{thm}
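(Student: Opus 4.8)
The plan is to reduce the statement to the Khovanskii--Teissier inequalities of Theorem~\ref{K-T} in dimension $n=4$. Set
\[
a_i:=\alpha^i\cdot\omega^{4-i}\qquad(i=0,1,2,3,4),
\]
so that the quantity to be controlled is $\frac{a_3a_0}{a_1}-2a_2+\frac{a_1a_4}{a_3}$. Since $\alpha$ and $\omega$ are K\"ahler classes, every $a_i$ is strictly positive, and $[\alpha]$ lies in the K\"ahler cone $\mathcal{K}_{\Gamma_4,\omega}$. Applying Theorem~\ref{K-T} with $[\beta]=[\omega]$ and $m=2,3,4$ successively yields the three log-concavity relations
\[
a_0a_2\le a_1^2,\qquad a_1a_3\le a_2^2,\qquad a_2a_4\le a_3^2,
\]
where, by the equality clause of Theorem~\ref{K-T}, equality holds in any one of them if and only if $[\omega]=\lambda[\alpha]$ for some $\lambda\in\mathbb{R}$.

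Next I would clear denominators. Multiplying the claimed inequality by $a_1a_3>0$, it becomes equivalent to
\[
a_0a_3^2-2a_1a_2a_3+a_1^2a_4\le 0,\qquad\text{i.e.}\qquad a_0a_3^2+a_1^2a_4\le 2a_1a_2a_3.
\]
From $a_0a_2\le a_1^2$ we get $a_0a_3^2\le a_1^2a_3^2/a_2$; from $a_2a_4\le a_3^2$ we get $a_1^2a_4\le a_1^2a_3^2/a_2$. Adding these gives $a_0a_3^2+a_1^2a_4\le 2a_1^2a_3^2/a_2$. Finally the middle relation $a_1a_3\le a_2^2$ yields $2a_1^2a_3^2/a_2=2a_1a_3\cdot(a_1a_3/a_2)\le 2a_1a_3\cdot a_2$, which chains to the desired bound $a_0a_3^2+a_1^2a_4\le 2a_1a_2a_3$.

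For the equality case, if $\alpha$ and $\omega$ are proportional then all three log-concavity relations are equalities and the whole expression vanishes. Conversely, if equality holds in the theorem, then in particular the last step above must be an equality; since $a_1^2a_3^2>0$ this forces $a_1a_3=a_2^2$, hence $[\omega]=\lambda[\alpha]$ by Theorem~\ref{K-T}. I do not expect any genuine obstacle here: the only points requiring care are invoking the correct instances of Theorem~\ref{K-T} (the cases $m=2,3,4$ with $[\beta]=[\omega]$) and checking that the step $a_1a_3<a_2^2$ is \emph{strict} in the non-proportional case, so that strictness propagates through the whole chain; the remainder is elementary algebra with positive quantities.
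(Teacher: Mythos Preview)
Your argument is correct and follows essentially the same route as the paper: both proofs extract the three log-concavity relations $a_{k-1}a_{k+1}\le a_k^2$ ($k=1,2,3$) from Theorem~\ref{K-T} and then combine them by elementary algebra to obtain $a_0a_3^2+a_1^2a_4\le 2a_1a_2a_3$. The paper multiplies adjacent pairs of these relations to get $a_0a_3\le a_1a_2$ and $a_1a_4\le a_2a_3$, then scales and adds; you instead bound both terms by the common intermediate $2a_1^2a_3^2/a_2$ and finish with the middle relation. These are algebraically equivalent rearrangements of the same three inequalities. Your treatment of the equality case is in fact more careful than the paper's, which concludes with a strict inequality without separating out the proportional case.
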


\begin{proof}
According to Theorem \ref{K-T}, we have
\[
(\alpha^{k+1}\cdot \omega^{3-k})(\alpha^{k-1}\cdot \omega^{5-k})\leq (\alpha^k \cdot\omega^{4-k})^2,
\]
for all $k=1,2,3$. More precisely, we have
\begin{equation}
  \label{eqn-k=1}
  (\alpha^{2}\cdot \omega^{2})( \omega^{4})\leq (\alpha \cdot\omega^{3})^2,
\end{equation}
\begin{equation}
  \label{eqn-k=2}
  (\alpha^{3}\cdot \omega)(\alpha\cdot \omega^{3})\leq (\alpha^2 \cdot\omega^{2})^2,
\end{equation}
and
\begin{equation}
  \label{eqn-k=3}
  (\alpha^{4})(\alpha^{2}\cdot \omega^{2})\leq (\alpha^3 \cdot\omega)^2.
\end{equation}

Multiplying Equation \eqref{eqn-k=1} and \eqref{eqn-k=2}, we have
\begin{equation}
  \label{eqn-12}
  (\omega^4)(\alpha^{3}\cdot \omega)\leq (\alpha \cdot\omega^3)(\alpha^2 \cdot\omega^2).
\end{equation}
Also multiplying Equation \eqref{eqn-k=2} and \eqref{eqn-k=3}, we have
\begin{equation}
  \label{eqn-23}
  (\alpha\cdot\omega^3)(\alpha^4)\leq (\alpha^3 \cdot\omega)(\alpha^2 \cdot\omega^2).
\end{equation}

Since $\alpha^3\cdot \omega>0$ and $\alpha\cdot\omega^3>0$, multiplying them to the both sides of Equation \eqref{eqn-12} and \eqref{eqn-23} respectively, we obtain
\[
(\alpha^3\cdot \omega)(\omega^4)(\alpha^{3}\cdot \omega)\leq (\alpha \cdot\omega^3)(\alpha^2 \cdot\omega^2)(\alpha^3\cdot \omega)
\]
and
\[
(\alpha\cdot\omega^3)(\alpha\cdot\omega^3)(\alpha^4)\leq (\alpha\cdot\omega^3) (\alpha^3 \cdot\omega)(\alpha^2 \cdot\omega^2).
\]

Adding these two inequalities together, we get
\[
\begin{split}
&(\omega^4)(\alpha^{3}\cdot \omega)^2 +(\alpha\cdot\omega^3)^2(\alpha^4)\\
\leq &2(\alpha \cdot\omega^3)(\alpha^2 \cdot\omega^2)(\alpha^3\cdot \omega).
\end{split}
\]
Hence, we have
\[
{\frac{\left(\alpha^{3} \cdot \omega\right)\left(\omega^{4}\right)}{\alpha \cdot \omega^{3}}-2\left(\alpha^{2} \cdot \omega^{2}\right)+\frac{\left(\alpha \cdot \omega^{3}\right)\left(\alpha^{4}\right)}{\alpha^{3} \cdot \omega}<0}.\qedhere
\]

\end{proof}

%

The range $\hat\theta\in (\frac{3\pi}{2},2\pi)$ implies that $c_1(L)$ is actually a K\"ahler class. Then Conjecture \ref{cx-CY} can be easily obtained as an application of Theorem \ref{thm-1}.

\begin{proof}
We assume the Hermitian metric $h$ on $L$ is a dHYM metric and the induced Chern curvature of $h$ is denoted by $F$. We also denote $\lambda_i(i=1,\cdots,4)$ are the eigenvalues of the endormorphism $\omega^{-1}\sqrt{-1}F\in \text{End}(T^{1,0}X)$. Then the dHYM metric condition is equivalent to the following equation on $\lambda_i$:
\[
\hat \theta=\sum_{i=1}^4 \arctan \lambda_i.
\]
Since $\hat\theta\in (\frac{3\pi}{2},2\pi)$, we know that $\lambda_i>0$ and $\lambda_i\lambda_j>1$ for $i\neq j$, i.e., $c_1(L)$ lies in the K\"ahler cone. Since $\lambda_i\lambda_j>1$, we get the first inequality immediately.

Choosing $\alpha=c_1(L)$ in Theorem \ref{thm-1}, we have
\[
{\frac{\left(c_{1}(L)^{3} \cdot \omega\right)\left(\omega^{4}\right)}{c_{1}(L) \cdot \omega^{3}}-2\left(c_{1}(L)^{2} \cdot \omega^{2}\right)+\frac{\left(c_{1}(L) \cdot \omega^{3}\right)\left(c_{1}(L)^{4}\right)}{c_{1}(L)^{3} \cdot \omega}\leq 0}.
\]
Then we obtain the second inequality since $c_{1}(L)^{2} \cdot \omega^{2}$ is strictly positive.
\end{proof}

At last, we prove the Chern number inequalities while $\hat\theta\in (\pi,\frac{3\pi}{2})$.

\begin{thm}
  The Chern number inequalities also hold while $\hat\theta\in (\pi,\frac{3\pi}{2})$.
\end{thm}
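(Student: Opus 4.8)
The plan is to combine Theorem~\ref{K-T} with the value of the central charge at $t=1$. Write $a=\omega^4$, $b=c_1(L)\cdot\omega^3$, $c=c_1(L)^2\cdot\omega^2$, $d=c_1(L)^3\cdot\omega$ and $e=c_1(L)^4$, so that the two Chern number inequalities to be proved read $d>b$ (together with $b>0$) and $\frac{da}{b}-6c+\frac{be}{d}<0$. By the computation recalled in the introduction, $Z_{[\omega],c_1(L)}(1)=-(a-6c+e)-4\sqrt{-1}\,(b-d)=-e^{\sqrt{-1}\hat\theta}R$ with $R>0$; since $\hat\theta\in(\pi,\frac{3\pi}{2})$ this number lies in the open first quadrant, so comparing imaginary parts gives $d>b$ and comparing real parts gives $e<6c-a$. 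These, together with the positivity of $b,c,d$, are the only facts about the dHYM metric that will be used beyond Khovanskii--Teissier.

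The first and crucial step is to show $c_1(L)\in\mathcal{K}_{\Gamma_3,\omega}$. At each point of $X$ let $\lambda_1,\dots,\lambda_4$ be the eigenvalues of $\omega^{-1}\sqrt{-1}F$ and write $\Lambda=(\lambda_1,\dots,\lambda_4)$, so that $\sum_i\arctan\lambda_i=\hat\theta\in(\pi,\frac{3\pi}{2})$. Four numbers in $(-\frac{\pi}{2},\frac{\pi}{2})$ with sum exceeding $\pi$ include at least three positive ones, so after relabelling $\lambda_1,\lambda_2,\lambda_3>0$; if $\lambda_4\ge0$ then $\sigma_1(\Lambda),\sigma_2(\Lambda),\sigma_3(\Lambda)>0$ trivially, so assume $\lambda_4=-\mu$ with $\mu>0$. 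Using $\arctan\lambda_i=\frac{\pi}{2}-\arctan\frac{1}{\lambda_i}$ for $i=1,2,3$ and $\frac{\pi}{2}-\arctan\mu=\arctan\frac{1}{\mu}$, the phase condition rearranges to
\[
\arctan\frac{1}{\lambda_1}+\arctan\frac{1}{\lambda_2}+\arctan\frac{1}{\lambda_3}<\arctan\frac{1}{\mu};
\]
since $\arctan$ is subadditive on $[0,\infty)$ this forces $\frac{1}{\lambda_1}+\frac{1}{\lambda_2}+\frac{1}{\lambda_3}<\frac{1}{\mu}$, which is exactly $\sigma_3(\Lambda)=\lambda_1\lambda_2\lambda_3-\mu(\lambda_1\lambda_2+\lambda_1\lambda_3+\lambda_2\lambda_3)>0$. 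Then $\sigma_1(\Lambda)>0$ and $\sigma_2(\Lambda)>0$ follow from $\sigma_3(\Lambda)>0$ together with the elementary inequalities $\lambda_1\lambda_2\lambda_3<(\lambda_1+\lambda_2+\lambda_3)(\lambda_1\lambda_2+\lambda_1\lambda_3+\lambda_2\lambda_3)$ and $(\lambda_1\lambda_2+\lambda_1\lambda_3+\lambda_2\lambda_3)^2\ge 3\lambda_1\lambda_2\lambda_3(\lambda_1+\lambda_2+\lambda_3)$, respectively. Hence $\Lambda\in\Gamma_3$ at every point, so $c_1(L)\in\mathcal{K}_{\Gamma_3,\omega}$; in particular $b,c,d>0$, and then $d>b$ already gives the first Chern number inequality $\frac{c_1(L)^3\cdot\omega}{c_1(L)\cdot\omega^3}>1$. (Alternatively, this pointwise statement---that supercritical phase forces membership in $\Gamma_{n-1}$---may be quoted from \cite{CJY}.)

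For the second inequality, apply Theorem~\ref{K-T} to $\alpha=c_1(L)\in\mathcal{K}_{\Gamma_3,\omega}$ and $\beta=\omega$: the cases $m=2$ and $m=3$ give, exactly as in \eqref{eqn-k=1} and \eqref{eqn-k=2} with $\alpha=c_1(L)$, the inequalities $ac\le b^2$ and $bd\le c^2$; multiplying them gives $ad\le bc$, as in \eqref{eqn-12}. From $d>b$ and $bd\le c^2$ we get $b<c$, and then $ac\le b^2<c^2$ gives $a<c$. Now, using $e<6c-a$ together with $b,d>0$,
\[
\frac{da}{b}+\frac{be}{d}<\frac{da}{b}+\frac{b(6c-a)}{d},
\]
and clearing the denominator $bd$ shows that $\frac{da}{b}+\frac{b(6c-a)}{d}<6c$ is equivalent to $a(d-b)(d+b)<6bc(d-b)$, hence (as $d>b$) to $ad+ab<6bc$; and this holds because $ad\le bc$ and $ab<bc$ (the latter from $a<c$), so in fact $ad+ab<2bc\le 6bc$. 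Therefore $\frac{da}{b}-6c+\frac{be}{d}<0$, which is the second Chern number inequality.

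The main obstacle is Step~1: establishing the $3$-positivity of $c_1(L)$ is where the hypothesis $\hat\theta>\pi$ is genuinely used, while after that the argument is only a short combination of the two Khovanskii--Teissier inequalities above with the elementary first-quadrant information about $Z_{[\omega],c_1(L)}(1)$.
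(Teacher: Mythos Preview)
Your proof is correct but follows a genuinely different route from the paper's. Both arguments begin by showing that the dHYM representative lies in $\Gamma_3$ pointwise (the paper quotes this from \cite{WangYuan}, while you give a short direct argument via subadditivity of $\arctan$), and both then invoke the $m=2$ and $m=3$ cases of Theorem~\ref{K-T} for $\alpha\in\mathcal{K}_{\Gamma_3,\omega}$ and $\beta=\omega$. The divergence is in how the phase hypothesis is exploited afterwards. The paper works \emph{pointwise}: it proves $\sigma_3>\sigma_1$, $\sigma_2-1-\sigma_4>0$ and the sharper estimate $\sigma_2>2$ at every point, integrates, combines these with Khovanskii--Teissier to get $\int\sigma_3-\int\sigma_1\int\sigma_2+\int\sigma_1<0$, and finally multiplies by $\tan\hat\theta$ and by $\int\sigma_2-1-\int\sigma_4>0$ to reach the Chern number inequality; this also splits into two cases according to the sign of $\inf_X\lambda_1$. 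You instead read off the \emph{global} facts $d>b$ and $e<6c-a$ directly from the location of $Z_{[\omega],c_1(L)}(1)=-e^{\sqrt{-1}\hat\theta}R$ in the open first quadrant, and then finish with a short chain $ac\le b^2$, $bd\le c^2$ $\Rightarrow$ $b<c$, $a<c$, $ad\le bc$ $\Rightarrow$ $ad+ab<2bc\le 6bc$, which is exactly the inequality needed once $e<6c-a$ is substituted. Your route is shorter, avoids the case split, and uses no pointwise estimates beyond $\Gamma_3$-positivity; the paper's route, on the other hand, yields the extra pointwise information $\sigma_2>2$ (emphasised in Remark~3.1), which may be of independent interest.
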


\begin{proof}
  For convenience, we always normalize the volume of $(X,\omega)$ is $1$, i.e.,
  \[
  \int_X\omega^4=1.
  \]
  We assume that $\alpha=\sqrt{-1}F\in c_1(L)$ is a dHYM metric and $\lambda_1\leq \lambda_2\leq \lambda_3\leq\lambda_4$ are the eigenvalues of $\omega^{-1}\alpha$. It is well-know that all $\lambda_i$ are continuous functions. Let $\sigma_k$ be the $k$-th elementary symmetric polynomial with respect to $(\lambda_1,\cdots,\lambda_4)$. Furthermore, according to \cite{WangYuan}, $\sigma_i>0$ for $i=1,2,3$, i.e. $\alpha\in\mathcal{K}_{\Gamma_3,\omega}$.

  Case 1: $\inf\limits_X \lambda_1\geq 0$, i.e., $\alpha$ is semi-positive, then we get the needed inequalities by Theorem \ref{thm-1}. Indeed, the Chern number inequalities hold for any $\alpha+\varepsilon \omega$ ,$\varepsilon> 0$. Let $\varepsilon\to 0$, we get the Chern number inequalities.

  Case 2: $\inf\limits_X \lambda_1 < 0$. For convenience, we assume that $\lambda_1(x_0)< 0$ at $x_0\in X$. According to the dHYM equation
  \[
  \sum_{i=1}^4\arctan \lambda_i=\hat \theta
  \]
  and $\hat\theta\in (\pi,\frac{3\pi}{2})$, we know that $\lambda_4\geq \lambda_3\geq \lambda_2>|\lambda_1|\geq 0$ and $\lambda_3\lambda_4\geq \lambda_2\lambda_4>1$. Then at $x_0$,
  \begin{equation}
    \begin{split}
      &\sigma_2-1-\sigma_4\\
      = &\sum_{i\neq j}\lambda_i\lambda_j -1-\lambda_1\lambda_2\lambda_3\lambda_4\\
      > & \lambda_1(\lambda_2+\lambda_3+\lambda_4)+\lambda_2\lambda_3+\lambda_3\lambda_4-\lambda_1\lambda_2\lambda_3\lambda_4\\
      \geq &\lambda_1(\lambda_2+\lambda_4)+\lambda_2\lambda_3+\lambda_3\lambda_4\\
      \geq & (\lambda_1+\lambda_4)(\lambda_2+\lambda_4)>0.
    \end{split}
  \end{equation}
 Since $\tan\hat\theta=\frac{\sigma_3-\sigma_1}{\sigma_2-1-\sigma_4}>0$,  then $\sigma_3>\sigma_1$ at $x_0$. Furthermore, $\sigma_3-\sigma_1>0$  at all $x\in X$.  Furthermore, at all $x\in X$, there holds
  \[
  \begin{split}
    &\sigma_3-\sigma_1\sigma_2+\sigma_1\\
    \leq&\sigma_3-\sigma_1(\sigma_2-\lambda_2\lambda_4)\\
    =&-\lambda_1^2\lambda_2-\lambda_1^2\lambda_3-\lambda_1^2\lambda_4-\lambda_1\lambda_2^2 -\lambda_1\lambda_2\lambda_3-\lambda_2^2\lambda_3 -\lambda_2\lambda_3\lambda_4 -\lambda_1\lambda_2\lambda_3 -\lambda_1\lambda_3^2\\
    & -\lambda_1\lambda_3\lambda_4-\lambda_2\lambda_3^2 -\lambda_3^2\lambda_4 -\lambda_1\lambda_2\lambda_4-\lambda_1\lambda_3\lambda_4-\lambda_1\lambda_4^2 -\lambda_3\lambda_4^2\\
    =&-(\lambda_2+\lambda_3+\lambda_4)(\lambda_1+\lambda_3)(\lambda_1+\lambda_2) -\lambda_3\lambda_4(\lambda_1+\lambda_3)-\lambda_4^2(\lambda_1+\lambda_3)<0.
  \end{split}
  \]
  Hence we have $\sigma_1\sigma_2>\sigma_1+\sigma_3>2\sigma_1$ and $\sigma_2>2$ at all $x\in X$.

  In conclusion, we have the following inequalities pointwise on $X$
  \begin{equation}
    \label{eqn:sigma_13}
    \sigma_3-\sigma_1>0,
  \end{equation}
    \begin{equation}
    \label{eqn:sigma_024}
    \sigma_2-\sigma_4-1>0,
  \end{equation}
  and
  \begin{equation}
  \label{sigma2}
    \sigma_2>2.
  \end{equation}
  Then we claim that
  \[
  \int_X\sigma_3-\int_X\sigma_1\int_X\sigma_2+\int_X\sigma_1<0,
  \]
  where
  \begin{equation}\label{eqn1}
  \int_X\sigma_1= 4\int_X \omega^3\wedge \alpha,
  \end{equation}
  \begin{equation}\label{eqn2}
  \int_X \sigma_2=6\int_X \omega^2\wedge \alpha^2,
  \end{equation}
  and
  \begin{equation}\label{eqn3}
  \int_X \sigma_3=4\int_X\omega\wedge\alpha^3,
  \end{equation}
  according to the definition of elementary symmetric polynomials. Indeed, since $\alpha\in \mathcal{K}_{\Gamma_3,\omega}$, by Theorem \ref{K-T} we have,
  \[
  0<\int_X \omega^3\wedge \alpha\int_X\omega\wedge \alpha^3\leq (\int_X\omega^2\wedge\alpha^2)^2,
  \]
  and
  \[
  0<\int_X \omega^2\wedge \alpha^2\int_X \omega^4\leq (\int_X\omega^3\wedge \alpha)^2.
  \]
  Multiplying these two inequalities together and applying the Equations \eqref{eqn1}-\eqref{eqn3}, we get the following inequality
  \[
  \int_X \sigma_3\leq \frac{1}{6}\int_X\sigma_1 \int_X\sigma_2.
  \]
  Then,
  \[
  \begin{split}
  &\int_X\sigma_3-\int_X\sigma_1\int_X\sigma_2+\int_X\sigma_1\\
  \leq &-\frac{5}{6}\int_X\sigma_1\int_X\sigma_2+\int_X\sigma_1\\
  <&-\frac{5}{3}\int_X\sigma_1+\int_X\sigma_1\\
  =&-\frac{2}{3}\int_X \sigma_1<0.
  \end{split}
  \]
  Since $\tan\hat \theta=\frac{\int_X \sigma_3-\int_X \sigma_1}{\int_X \sigma_2-1-\int_X\sigma_4}>0$, we know that
  \begin{equation}
  \label{eqn-need1}
  \frac{\int_X \sigma_3-\int_X \sigma_1}{\int_X \sigma_2-1-\int_X\sigma_4}(\int_X\sigma_3-\int_X\sigma_1\int_X\sigma_2+\int_X\sigma_1)<0<(\int_X\sigma_1)^2.
  \end{equation}
  Multiplying $\int_X \sigma_2-1-\int_X\sigma_4>0$ on both sides of the inequality \eqref{eqn-need1}, we get
  \[
  (\int_X\sigma_3)^2 -\int_X\sigma_1\int_X\sigma_2\int_X\sigma_3+(\int_X\sigma_1)^2\int_X\sigma_4<0
  \]
  which is equivalent to
  \[
  {\frac{\left(c_{1}(L)^{3} \cdot \omega\right)\left(\omega^{4}\right)}{c_{1}(L) \cdot \omega^{3}}-6\left(c_{1}(L)^{2} \cdot \omega^{2}\right)+\frac{\left(c_{1}(L) \cdot \omega^{3}\right)\left(c_{1}(L)^{4}\right)}{c_{1}(L)^{3} \cdot \omega}\leq 0}
  \]
  according to Equations \eqref{eqn1}-\eqref{eqn3}.\qedhere
\end{proof}

\begin{zj}
  The key estimates in the proof above are the inequalities \eqref{eqn:sigma_13}-\eqref{sigma2}. While proving these inequalities, we do not distinguish the sign of $\lambda_1$. We can also prove $\sigma_2>2$ in the following way.

  At the point $x\in X$ where $\lambda_1(x)\leq 0$, we have
  \[
  \arctan\lambda_1 +\arctan\lambda_2+\arctan\lambda_3\in(\frac{\pi}{2},\pi)
  \]
  according to $\hat\theta\in (\pi,\frac{3\pi}{2})$. Since
  \[
  0>\tan(\arctan\lambda_1 +\arctan\lambda_2+\arctan\lambda_3)=\frac{\lambda_1+\lambda_2+\lambda_3-\lambda_1\lambda_2\lambda_3}{1-\lambda_1\lambda_2 -\lambda_2\lambda_3-\lambda_1\lambda_3}
  \]
  and $\lambda_1+\lambda_2+\lambda_3-\lambda_1\lambda_2\lambda_3>0$, we have
  \[
  \lambda_1\lambda_2 +\lambda_2\lambda_3+\lambda_1\lambda_3>1.
  \]
  Hence,
  \[
  \begin{split}
  &\sigma_2-\sigma_4-1\\
  =&\lambda_1\lambda_2 +\lambda_2\lambda_3+\lambda_1\lambda_3 +\lambda_1\lambda_4+\lambda_3\lambda_4+\lambda_2\lambda_4-\lambda_1\lambda_2\lambda_3\lambda_4-1\\
  > &(\lambda_1+\lambda_3+\lambda_2)\lambda_4-\lambda_1\lambda_2\lambda_3\lambda_4> 0.
  \end{split}
  \]
  Then we get $\sigma_3-\sigma_1>0$, since $\hat\theta\in (\pi,\frac{3\pi}{2})$ and $\tan\hat\theta=\frac{\sigma_3- \sigma_1}{\sigma_2-1-\sigma_4}>0$. Furthermore, since
  \[\arctan\lambda_2+\arctan\lambda_4\in (\frac{\pi}{2},\pi),
  \]
  and
  \[\arctan\lambda_3+\arctan\lambda_4\in (\frac{\pi}{2},\pi),
  \] we know that $\lambda_2\lambda_4>1$ and $\lambda_3\lambda_4>1$.
  Thus
  \[
  \begin{split}
\sigma_2=&\lambda_1\lambda_2 +\lambda_2\lambda_3+\lambda_1\lambda_3 +\lambda_1\lambda_4+\lambda_3\lambda_4+\lambda_2\lambda_4\\
  > &1+(\lambda_1+\lambda_3)\lambda_4+\lambda_2\lambda_4> 2.
  \end{split}
  \]

  At the points $x\in X$ where $\lambda_1(x)>0$, we have
  \[
  \sigma_2>\lambda_2\lambda_4+\lambda_3\lambda_4>2.
  \]

\end{zj}

%

\end{document}